\documentclass[a4paper,10pt]{article}
\usepackage{graphics}
\usepackage{latexsym}
\usepackage{times}
\usepackage{graphicx}
\usepackage{amsmath,amssymb,amsthm}

\setlength{\textheight}{7.125in}
\setlength{\textwidth}{4.5in}
\setlength{\footskip}{2cm}
\setlength{\columnsep}{1cm}

\theoremstyle{plain}
\newtheorem{Thm}{Theorem}[section]
\newtheorem{Lem}[Thm]{Lemma}

\newtheorem*{Example}{Example}
\newtheorem{Cor}[Thm]{Corollary}

\newcommand{\cG}{\ensuremath{\mathcal{G}}}
\newcommand{\cH}{\ensuremath{\mathcal{H}}}

\newcommand{\cK}{\ensuremath{\mathcal{K}}}

\title{On the hypercompetition numbers of hypergraphs}

\author{
\begin{tabular}{c}
{\sc Boram PARK}
\thanks{The author was supported by Seoul Fellowship.} \\
[1ex]
Department of Mathematics Education \\
Seoul National University, Seoul 151-742, Korea \\
{\tt kawa22@snu.ac.kr} \\
\\
{\sc Yoshio SANO}
\thanks{This work was supported by Priority Research Centers Program
through the National Research Foundation of Korea (NRF) funded by
the Ministry of Education, Science and Technology 
(MEST) (No. 2010-0029638).}\\
[1ex]
Pohang Mathematics Institute \\
POSTECH, Pohang 790-784, Korea \\
{\tt ysano@postech.ac.kr}
\end{tabular} }

\date{}

\begin{document}
\maketitle

\begin{abstract}
The competition hypergraph $C{\cH}(D)$ of a digraph $D$
is the hypergraph
such that the vertex set is the same as $D$
and $e \subseteq V(D)$
is a hyperedge if and only if $e$ contains at least $2$ vertices and
$e$ coincides with the in-neighborhood of some vertex $v$
in the digraph $D$.
Any hypergraph 
$\cH$ with sufficiently many isolated vertices is
the competition hypergraph of an acyclic digraph.
The hypercompetition number $hk(\cH)$
of a hypergraph  $\cH$ is defined to be the smallest number
of such isolated vertices.

In this paper, we study the hypercompetition numbers of hypergraphs.
First, we give two lower bounds for the hypercompetition numbers
which hold for any hypergraphs.
And then, by using these results,
we give the exact hypercompetition numbers for
some family of uniform hypergraphs.
In particular, we give the exact value of the hypercompetition number
of a connected graph.
\end{abstract}

\noindent
{\bf Keywords:}
competition graph;
competition number;
competition hypergraph;
hypercompetition number

\section{Introduction}

All hypergraphs considered in this paper
may have isolated vertices but have no loops,
where
a vertex $v$ in a hypergraph is called {\it isolated} if
$v$ is not contained in any hyperedge in the hypergraph,
and
a hyperedge $e$ in a hypergraph is called a {\it loop} if
$e$ consists of exactly one vertex.
So all the hyperedges of hypergraphs, in this paper,
have at least two vertices.
If $(x,y)$ is an arc of a digraph $D$,
then $x$ is called an {\it in-neighbor} of $y$ in $D$
and $y$ is called an {\it out-neighbor} of $x$ in $D$.
The {\it in-neighborhood} $N_D^-(v)$ of a vertex $v$
in a digraph $D$ is the set of in-neighbors of $v$ in $D$.

The notion of a competition graph was introduced by Cohen~\cite{Cohen1}
in 1968 and has arisen from ecology.
The {\em competition graph} $C(D)$ of a digraph $D$ is the graph which
has the same vertex set as $D$ and has an edge between vertices $u$ and $v$
if and only if there exists a common out-neighbor of $u$ and $v$ in $D$.
Any graph $G$ together
with sufficiently many isolated vertices
is the competition graph of an acyclic digraph.
Roberts \cite{MR0504054} defined the {\em competition number} $k(G)$ of
a graph $G$ to be the minimum number $k$ such that $G$ together with
$k$ isolated vertices is the competition graph of an acyclic digraph.
Since Cohen introduced the notion of a competition graph,
various variations have been defined and studied by many authors
(see the survey articles by  Kim~\cite{Kim93}
and Lundgren~\cite{Lundgren89}).

The notion of a competition hypergraph was introduced by
Sonntag and Teichert \cite{CompHyp}
as a variant of a competition graph.
The {\it competition hypergraph} $C\cH(D)$ of
a digraph $D$ is the hypergraph
such that the vertex set is the same as $D$ and $e \subseteq V(D)$
is a hyperedge if and only if
$e$ contains at least $2$ vertices and
$e$ coincides with the in-neighborhood of some vertex $v$
in the digraph $D$
(see \cite{CompHyp, MR2438037, MR2438038,Son4} for studies on competition
hypergraphs of digraphs).
Any hypergraph 
$\cH$ with sufficiently many isolated vertices is
the competition hypergraph of an acyclic digraph.
The {\it hypercompetition number} $hk(\cH)$
of a hypergraph $\cH$ is defined to be the smallest number
of such isolated vertices.
Though Sonntag and Teichert called
it just the {\it competition number}
of $\cH$ and denoted it by $k(\cH)$,
we use the terminology ``hypercompetition number" and the notation
$hk(\cH)$ to avoid confusion in the case where we regard graphs
as hypergraphs.
A hypergraph $\cH$ is called a {\it graph}
if $|e|=2$ for any hyperedge $e \in E(\cH)$.
The following example shows the difference between the (ordinary)
competition number of a graph
and the hypercompetition number of a graph.

\begin{Example}
{\rm
Let $\cG$ be a triangle, i.e.,
\[
V(\cG)=\{v_1, v_2, v_3\}, \quad
E(\cG)=\{\{v_1, v_2\}, \{v_1, v_3\}, \{v_2, v_3\}\}.
\]
Since the digraph $D$ defined by 
$V(D) = V(\cG) \cup \{ z\}$
and 
$A(D)=\{(v_1,z), (v_2,z),$ $(v_3,z) \}$
is acyclic and
its competition graph $C(D)$ is the graph $\cG$
with one isolated vertex $z$,
the competition number $k(\cG)$ of $\cG$ is at most one.
But the hypercompetition number $hk(\cG)$ of $\cG$ is equal to $2$,
which follows from Theorem \ref{Hyper:thm;2uniform}.
}
\end{Example}

Opsut \cite{MR679638} showed that the computation of
the competition number of an arbitrary graph is an NP-hard problem.
On the other hand, we can show that the hypercompetition number
of a connected graph is computed easily.

In this paper, we study the hypercompetition numbers of hypergraphs.
First we give two lower bounds for the hypercompetition numbers
which hold for any hypergraph, and then we give several formulas
for the hypercompetition numbers
for some families of uniform hypergraphs.

\section{Main results}

We introduce notation and terminologies used in this section.
For a (hyper)graph $\cH$ and a finite set $I$,
we denote by $\cH\cup I$ the (hyper)graph such that
$V(\cH\cup I)=V(\cH)\cup I$ and
$E(\cH\cup I)=E(\cH)$.
The \textit{degree} $\deg_{\cH}(v)$
of a vertex $v$ in a hypergraph $\cH$
is defined to be the number of hyperedges containing the vertex $v$.
We say two vertices $u$ and $v$ are \textit{adjacent} in $\cH$
if there is a hyperedge $e$ in $\cH$ such that $\{u,v\} \subset e$.

A hypergraph $\cH$ is called {\it $r$-uniform}
if each hyperedge of the hypergraph $\cH$ has the same size $r$,
where $2 \leq r \leq |V(\cH)|$. Note that $2$-uniform hypergraphs are graphs.

A sequence $v_0 v_1 \cdots v_k$ of distinct vertices of a hypergraph $\cH$
is called a \textit{path}
if there exist $k$ distinct hyperedges $e_1, e_2, \ldots, e_k$
such that $e_i$ contains $\{v_{i-1}, v_i\} $ for each $1 \le i \le k$.
A sequence $v_0 v_1 \cdots v_k$ of vertices of a hypergraph $\cH$ 
where $v_0 v_1 \cdots v_{k-1}$ is a path and 
$v_0=v_k$ is called a \textit{cycle}. 
We say that $\cH$ is {\it connected}
if there exists a path between any two vertices of $\cH$.
A \textit{connected component} of $\cH$ is a maximal connected
subhypergraph of $\cH$.

For a digraph $D$,
an ordering $v_1, v_2, \ldots, v_n$ of the vertices of $D$
is called an {\it acyclic ordering} of $D$
if $(v_i,v_j) \in A(D)$ implies $i<j$. 
It is well-known that a digraph $D$ is acyclic if and only if 
there exists an acyclic ordering of $D$. 
For simplicity, 
we denote a set $\{ (x,v) \mid x\in S\}$ 
by $S \rightarrow v$.

\subsection{Two lower bounds for the hypercompetition number of a hypergraph}
\label{Hyper:sec:LB}

In this section, we give two lower bounds
for the hypercompetition numbers of hypergraphs.
Opsut \cite{MR679638} showed the following two lower bounds
for competition numbers:
\begin{itemize}
\item[(LB1)]
For any graph $G$,
$k(G) \geq \theta_e(G) - |V(G)| +2$;
\item[(LB2)]
For any graph $G$,
$k(G) \geq \min_{v \in V(G)} \theta (N_G(v))$,
\end{itemize}
where $\theta_e(G)$ is the smallest number of cliques in $G$
that cover the edges of $G$,
$\theta(H)$ is the smallest number of cliques in a graph $H$
that cover the vertices of $H$ and
$N_G(v) := \{ u \in V(G) \mid uv \in E(G) \}$
is the open neighborhood of a vertex v in the graph $G$.

Our first lower bound for hypercompetition numbers,
which corresponds to (LB1), is as follows:

\begin{Thm}\label{Hyper:Thm;general_lower}
Let $\cH$ be a hypergraph. Then
\[
hk(\cH) \geq |E(\cH)|-|V(\cH)|+ \min_{e\in E(\cH)}|e|.
\]
\end{Thm}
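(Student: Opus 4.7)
The plan is a direct counting argument exploiting an acyclic ordering of the digraph realizing $\cH$. Suppose $k := hk(\cH)$ and let $D$ be an acyclic digraph with $C\cH(D) = \cH \cup I$, where $I$ is a set of $k$ isolated vertices. Then $|V(D)| = |V(\cH)| + k$, so $D$ admits an acyclic ordering $v_1, v_2, \ldots, v_n$ with $n = |V(\cH)| + k$.

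Next I would set up an injection $\phi \colon E(\cH) \to V(D)$ by choosing, for each hyperedge $e \in E(\cH)$, a vertex $\phi(e)$ in $D$ with $N_D^-(\phi(e)) = e$; such a vertex exists by the definition of the competition hypergraph, and the assignment is injective since a vertex has a unique in-neighborhood. Write $\phi(e) = v_{i(e)}$. Because the listing $v_1, \ldots, v_n$ is an acyclic ordering, all in-neighbors of $v_{i(e)}$ lie in $\{v_1, \ldots, v_{i(e)-1}\}$, so
$$|e| = |N_D^-(v_{i(e)})| \leq i(e) - 1.$$
Letting $r := \min_{e \in E(\cH)} |e|$, this forces $i(e) \geq r + 1$ for every hyperedge $e$.

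Thus the image of $\phi$ is contained in $\{v_{r+1}, v_{r+2}, \ldots, v_n\}$, a set of size $n - r$. Since $\phi$ is injective,
$$|E(\cH)| \leq n - r = |V(\cH)| + k - r,$$
which rearranges to the desired inequality $k \geq |E(\cH)| - |V(\cH)| + r$.

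There is no real obstacle here: the bound falls out once one notices that each hyperedge $e$ must be realized as the in-neighborhood of a vertex whose position in the acyclic order is at least $|e|+1$, so the first $r$ vertices in the order are necessarily ``wasted'' and contribute to the required number of extra isolated vertices. The only point that needs a line of care is the injectivity of $\phi$, which is immediate because a single vertex $v$ of $D$ determines its in-neighborhood uniquely.
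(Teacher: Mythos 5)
Your proof is correct and follows essentially the same strategy as the paper: fix an acyclic ordering of a realizing digraph and count how many positions can host a vertex whose in-neighborhood is a hyperedge, noting that the first $\min_{e}|e|$ positions cannot. Your version is slightly more streamlined (the explicit injection $\phi$ and the direct bound $i(e)\geq |e|+1$ replace the paper's auxiliary index $l$), but the underlying counting idea is identical.
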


\begin{proof}
Let $n$ and $k$ be the number of vertices in a hypergraph $\cH$
and the hypercompetition number $hk(\cH)$
of the hypergraph $\cH$, respectively.
Then there exists an acyclic digraph $D$
such that $C{\cH}(D) = \cH \cup \{z_1, \ldots, z_k\}$. 
Furthermore, $D$ can be chosen such that 
$v_1, v_2, \ldots, v_n,$ $z_1,$ $\ldots,$ $z_k$ 
is an acyclic ordering of $D$. 
Let $l$ be the smallest index 
such that $\{v_1, v_2, \ldots, v_l \}$ contains a hyperedge of $\cH$.
If there is a vertex $v_j$ in the set 
$\{v_1, v_2, \ldots, v_l\}$
such that $v_j$ has at least two in-neighbors in $D$,
then $N_D^{-}(v_j)$ is a hyperedge of $\cH$ 
and so $\{ v_1, \ldots, v_{j-1} \}$ contains a hyperedge of $\cH$, 
which contradicts the choice of $l$.
Therefore,
$|N_D^{-}(v)| \leq 1$ for any $v \in \{v_1, v_2, \ldots, v_l \}$.
Then all neighborhoods of size at least $2$
are in-neighborhoods of vertices in
$V(D) \setminus \{ v_1, v_2, \ldots, v_l \}$.
So $n+k-l \geq |E(\cH)|$, i.e., $k \geq |E(\cH)|-n+l$.
Since $\min_{e\in E(\cH)}|e| \le l$,
we have $|E(\cH)|-n+ \min_{e\in E(\cH)}|e| \leq |E(\cH)|-n+l \leq k$.
\end{proof}

We present our second lower bound for hypercompetition numbers,
which corresponds to (LB2).

\begin{Thm}\label{Hyper:Thm;general_lower2}
Let $\cH$ be a hypergraph. Then
\[
hk(\cH) \geq \min_{v \in V(\cH)} \deg_{\cH}(v).
\]
\end{Thm}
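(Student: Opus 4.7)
The plan is to adapt the classical Opsut bound (LB2) to the hypergraph setting, by exhibiting a single vertex of $\cH$ whose hyperedge-degree is bounded above by $hk(\cH)$. As in the proof of Theorem \ref{Hyper:Thm;general_lower}, the natural candidate for such a vertex is the last vertex of $V(\cH)$ in a suitably chosen acyclic ordering of $D$.

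Concretely, I would set $k = hk(\cH)$ and choose an acyclic digraph $D$ with $C\cH(D) = \cH \cup \{z_1, \ldots, z_k\}$ admitting the acyclic ordering $v_1, v_2, \ldots, v_n, z_1, \ldots, z_k$; the existence of such a $D$ was already recorded in the proof of Theorem \ref{Hyper:Thm;general_lower} and so may be invoked directly. The focus then falls on $v_n$: since $v_n$ precedes only the $z_j$'s in the ordering, every out-neighbor of $v_n$ in $D$ lies in $\{z_1, \ldots, z_k\}$, giving at most $k$ out-neighbors of $v_n$ altogether.

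Each hyperedge of $\cH$ containing $v_n$ has the form $N_D^-(w)$ for some $w \in V(D)$ with $v_n \in N_D^-(w)$, i.e., for some out-neighbor $w$ of $v_n$. Selecting one such $w$ per hyperedge produces an injection from the hyperedges through $v_n$ into the out-neighbors of $v_n$, since distinct hyperedges are distinct in-neighborhoods. Therefore $\deg_\cH(v_n)$ is at most the number of out-neighbors of $v_n$, which is at most $k$, and I conclude
\[
hk(\cH) \;=\; k \;\geq\; \deg_\cH(v_n) \;\geq\; \min_{v \in V(\cH)} \deg_\cH(v).
\]

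The argument is essentially bookkeeping once the acyclic ordering has been arranged with the added isolated vertices placed at the end; the only step that needs care is this arrangement, but it is precisely the reduction already carried out in Theorem \ref{Hyper:Thm;general_lower}, so no new obstacle arises. I do not anticipate a serious difficulty beyond being explicit that distinct hyperedges containing $v_n$ correspond to distinct out-neighbors of $v_n$, which is immediate from the definition of the competition hypergraph.
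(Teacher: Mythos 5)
Your proposal is correct and follows essentially the same route as the paper's proof: fix an acyclic ordering $v_1,\ldots,v_n,z_1,\ldots,z_k$ with the added isolated vertices last, and observe that the hyperedges through $v_n$ inject into the out-neighbors of $v_n$, all of which lie among the $z_j$'s. Your version merely spells out the injection step that the paper leaves implicit.
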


\begin{proof}
Let $n$ and $k$ be the number of vertices in a hypergraph $\cH$
and the hypercompetition number $hk(\cH)$
of the hypergraph $\cH$, respectively.
Let $m:=\min_{v \in V(\cH)} \deg_{\cH}(v)$.
Then there exists an acyclic digraph $D$ such that
$C{\cH}(D) = \cH \cup \{z_1, \ldots, z_k\}$,
and so there is an acyclic ordering
$v_1, v_2, \ldots, v_n, z_1, \ldots, z_k$ of $D$.
Since $v_n$ is contained in at least $m$ hyperedges,
$v_n$ has at least $m$ out-neighbors in $D$.
Thus $m \le k$.
\end{proof}

For a hypergraph $\cH$ with no isolated vertices,
$\min_{v \in V(\cH)} \deg_{\cH}(v) \ge 1$
and so the following corollary holds
(this
is also justified
by the fact that any acyclic digraph has a vertex
which has no out-neighbors).

\begin{Cor} \label{Hyper:cor;isolated_lower}
For a hypergraph $\cH$ with no isolated vertex, $hk(\cH) \ge 1$.
\end{Cor}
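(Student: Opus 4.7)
The plan is to give two short, independent derivations of the corollary, matching the two justifications indicated in the statement.

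The first derivation is an immediate application of Theorem \ref{Hyper:Thm;general_lower2}. Since $\cH$ has no isolated vertex, every vertex belongs to at least one hyperedge, so $\deg_{\cH}(v)\geq 1$ for every $v\in V(\cH)$. Taking the minimum over $V(\cH)$ gives $\min_{v\in V(\cH)}\deg_{\cH}(v)\geq 1$, and the theorem yields $hk(\cH)\geq 1$.

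The second derivation, flagged in the parenthetical remark, is a direct contradiction argument that does not invoke the lower bound machinery. Suppose toward contradiction that $hk(\cH)=0$, so there exists an acyclic digraph $D$ with $V(D)=V(\cH)$ and $C\cH(D)=\cH$. Any acyclic digraph admits an acyclic ordering of its vertices, and the last vertex $v$ in such an ordering has no out-neighbors in $D$. Then $v$ is not an element of any in-neighborhood $N_D^{-}(w)$ for $w\in V(D)$, so $v$ lies in no hyperedge of $C\cH(D)=\cH$. Hence $v$ is an isolated vertex of $\cH$, contradicting the hypothesis.

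Either argument is essentially a one-line deduction, so there is no serious obstacle; the only thing worth being careful about is the convention adopted at the start of the paper that hyperedges must contain at least two vertices, which is precisely what makes a sink of $D$ count as isolated in $C\cH(D)$ (an in-neighborhood of size $1$ does not produce a hyperedge, but that is irrelevant to whether the sink itself is covered).
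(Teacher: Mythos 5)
Your proposal is correct and matches the paper exactly: the paper justifies the corollary by the same two observations, namely that $\min_{v\in V(\cH)}\deg_{\cH}(v)\ge 1$ combined with Theorem~\ref{Hyper:Thm;general_lower2}, and (parenthetically) that every acyclic digraph has a vertex with no out-neighbors, which would be isolated in the competition hypergraph. Both of your derivations are sound, and your closing remark about size-one in-neighborhoods not producing hyperedges correctly addresses the only subtlety.
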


\subsection{The hypercompetition numbers of uniform hypergraphs}
\label{Hyper:sec:uniform}

In this subsection, we give the exact values of 
the hypercompetition numbers of several kinds of uniform hypergraphs 
by using results in Subsection \ref{Hyper:sec:LB}. 
Roberts \cite{MR0504054}  showed the following results 
for the (ordinary) competition numbers of graphs: 
\begin{itemize}
\item[(R1)]
For a triangle free connected graph $G$, $k(G) =|E(G)| - |V(G)| +2$;
\item[(R2)]
For a chordal graph $G$, $k(G) \le 1$,
and the equality holds if and only if $G$ has no isolated vertex.
\end{itemize}
The first result (R1) gives a graph family
which satisfies the equality of (LB1)
and the second result (R2) gives a graph family
which satisfies the equality of (LB2).
In this section, we found two hypergraph families
that correspond to (R1) and (R2), respectively.

An ordering $v_1,v_2,\ldots,v_n$ of the vertices
of an $r$-uniform hypergraph $\cH$ is called
an \textit{elimination ordering} of $\cH$
if, for each $r\le i \le n$, the vertex $v_i$ has degree one
in the subhypergraph of $\cH$ induced by $\{v_1,\ldots,v_i\}$.
Note that if an $r$-uniform  hypergraph $\cH$ has 
an elimination ordering then $|E(\cH)|=n-r+1$.

\begin{Lem}\label{Hyper:Lem:elimi:uniform}
Let $n$ and $r$ be positive integers with $r<n$
and $\cH$ be a connected $r$-uniform hypergraph 
with $n$ vertices 
which has an elimination ordering of $\cH$.
Then $hk(\cH) =1$.
\end{Lem}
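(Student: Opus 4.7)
The plan is to establish the matching bounds $hk(\cH) \ge 1$ and $hk(\cH) \le 1$. The lower bound is essentially free: since $\cH$ is connected and has $n \ge r+1 \ge 3$ vertices, no vertex of $\cH$ can be isolated, so Corollary~\ref{Hyper:cor;isolated_lower} gives $hk(\cH) \ge 1$.

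For the upper bound I would exhibit an explicit acyclic digraph $D$ on $n+1$ vertices with $C\cH(D) = \cH \cup \{z\}$. Let $v_1, v_2, \ldots, v_n$ be the given elimination ordering. For each $i$ with $r \le i \le n$, the vertex $v_i$ has degree one in the subhypergraph induced by $\{v_1, \ldots, v_i\}$, so there is exactly one hyperedge $e_i \in E(\cH)$ with $v_i \in e_i \subseteq \{v_1, \ldots, v_i\}$; since $\cH$ is $r$-uniform this forces $e_r = \{v_1, \ldots, v_r\}$. A short argument (pick any $e \in E(\cH)$ and let $v_i$ be its element of largest index; then $e = e_i$ by uniqueness) shows that $E(\cH) = \{e_r, e_{r+1}, \ldots, e_n\}$, and these hyperedges are pairwise distinct since $e_i$ contains $v_i$ but none of $v_{i+1}, \ldots, v_n$.

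I would then define $D$ on the vertex set $V(\cH) \cup \{z\}$ (with $z \notin V(\cH)$) by taking, for each $r+1 \le i \le n$, all arcs in $e_{i-1} \rightarrow v_i$, together with the arcs in $e_n \rightarrow z$. The ordering $v_1, \ldots, v_n, z$ is an acyclic ordering of $D$, so $D$ is acyclic. By construction, $N_D^-(v_i) = \emptyset$ for $i \le r$, $N_D^-(v_i) = e_{i-1}$ for $r+1 \le i \le n$, and $N_D^-(z) = e_n$. Therefore the in-neighborhoods of size at least two are exactly $e_r, e_{r+1}, \ldots, e_n$, i.e., the hyperedges of $\cH$, and $z$ is isolated in $C\cH(D)$. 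Thus $C\cH(D) = \cH \cup \{z\}$, giving $hk(\cH) \le 1$.

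The only nontrivial point is the structural consequence of the elimination ordering, namely that the map $i \mapsto e_i$ is a bijection between $\{r, r+1, \ldots, n\}$ and $E(\cH)$ and that $e_i \subseteq \{v_1,\ldots,v_i\}$ with $v_i \in e_i$; once this is extracted from the definition, both the acyclicity of $D$ and the identification of $C\cH(D)$ with $\cH \cup \{z\}$ follow immediately from the way the arcs are routed according to the ordering.
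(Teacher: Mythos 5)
Your proposal is correct and follows essentially the same route as the paper: the same digraph $D$ with arcs $e_{i}\rightarrow v_{i+1}$ for $r\le i\le n-1$ and $e_n\rightarrow z$, verified acyclic via the ordering $v_1,\ldots,v_n,z$, combined with Corollary~\ref{Hyper:cor;isolated_lower} for the lower bound. You merely spell out in more detail the bijection $i\mapsto e_i$ that the paper leaves as "we can check."
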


\begin{proof}
Let $n$ and $t$ be the numbers of vertices and hyperedges
in a hypergraph $\cH$, respectively.
Let $v_1,v_2,\ldots,v_n$ be an {elimination ordering} of $\cH$.
For each $r \leq i \leq n$, let $e_i$ be the unique hyperedge containing $v_i$
in the subhypergraph of $\cH$ induced by $\{v_1,\ldots,v_i\}$. 
We define a digraph $D$ by
\begin{eqnarray*}
V(D) &:=& V(\cH) \cup \{ z \}, \\
A(D) &:=&  \left( \bigcup_{i=r}^{n-1} ( e_i\rightarrow  v_{i+1} )\right)
 \cup ( e_{n}\rightarrow z ) .
\end{eqnarray*}
Then we can check that
$C{\cH}(D) = \cH \cup \{ z \}$
and that $D$ is acyclic.
Thus $hk(\cH) \leq 1$.
By Corollary~\ref{Hyper:cor;isolated_lower}, we have $hk(\cH) \geq 1$.
Hence the theorem holds.
\end{proof}

The following theorem gives a family of hypergraphs
whose hypercompetition numbers satisfy
the equality of Theorem~\ref{Hyper:Thm;general_lower},
that corresponds to (R1).

\begin{Thm}\label{Hyper:Thm:elimi:uniform}
Let $n$ and $r$ be positive integers such that $r<n$,
and $\cH$ be a connected $r$-uniform hypergraph 
with $n$ vertices. 
Suppose that $\cH$ has a spanning subhypergraph $\cH_0$
which has an elimination ordering of $\cH_0$.
Then
\[
hk(\cH) =|E(\cH)|-|V(\cH)|+r.
\]
\end{Thm}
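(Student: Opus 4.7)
The plan is to match the lower bound coming from Theorem~\ref{Hyper:Thm;general_lower}. Since $\cH$ is $r$-uniform we have $\min_{e\in E(\cH)}|e|=r$, and that theorem already yields $hk(\cH)\ge |E(\cH)|-|V(\cH)|+r$. So the substance of the argument is an explicit construction witnessing the matching upper bound, which I would obtain by enlarging the acyclic digraph used in Lemma~\ref{Hyper:Lem:elimi:uniform}.

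First I would fix an elimination ordering $v_1,\ldots,v_n$ of $\cH_0$ and, for each $r\le i\le n$, let $e_i$ be the unique hyperedge of $\cH_0$ contained in $\{v_1,\ldots,v_i\}$ that contains $v_i$. These $n-r+1$ hyperedges are pairwise distinct, because $e_i$ does not contain any $v_j$ with $j>i$, so they exhaust $E(\cH_0)$. Let $f_1,\ldots,f_s$ enumerate the remaining hyperedges of $\cH$, so that $s=|E(\cH)|-(n-r+1)$.

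Then I would build a digraph $D$ on the vertex set $V(\cH)\cup\{z_1,\ldots,z_{s+1}\}$ whose arc set is
\[
\Bigl(\bigcup_{i=r}^{n-1}(e_i\to v_{i+1})\Bigr)\cup (e_n\to z_1)\cup \Bigl(\bigcup_{j=1}^{s}(f_j\to z_{j+1})\Bigr),
\]
namely the arcs of the Lemma~\ref{Hyper:Lem:elimi:uniform} construction augmented by one fresh ``sink'' vertex per hyperedge in $E(\cH)\setminus E(\cH_0)$. With the vertex order $v_1,\ldots,v_n,z_1,\ldots,z_{s+1}$ every arc is forward, so $D$ is acyclic; and reading off the in-neighborhoods of size at least $2$ recovers exactly $\{e_r,\ldots,e_n\}\cup\{f_1,\ldots,f_s\}=E(\cH)$. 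Hence $C\cH(D)=\cH\cup\{z_1,\ldots,z_{s+1}\}$ and $hk(\cH)\le s+1=|E(\cH)|-|V(\cH)|+r$.

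The only point that needs care is the bookkeeping verifying that the hyperedges $e_r,\ldots,e_n$ extracted from the elimination ordering are distinct and exhaust $E(\cH_0)$; this is immediate from the definition of the ordering and poses no real obstacle. Beyond that, the argument is just an adaptation of Lemma~\ref{Hyper:Lem:elimi:uniform}: each additional hyperedge of $\cH$ outside $\cH_0$ costs exactly one extra isolated vertex, and this is precisely the slack between the lower bound and the value in the lemma.
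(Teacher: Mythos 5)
Your proposal is correct and follows essentially the same route as the paper: the lower bound comes from Theorem~\ref{Hyper:Thm;general_lower}, and the upper bound is the construction of Lemma~\ref{Hyper:Lem:elimi:uniform} (which you simply inline rather than cite) augmented with one new sink vertex per hyperedge of $E(\cH)\setminus E(\cH_0)$. Your explicit check that the hyperedges $e_r,\ldots,e_n$ are distinct and exhaust $E(\cH_0)$ is the right bookkeeping point and matches the paper's remark that an elimination ordering forces $|E(\cH_0)|=n-r+1$.
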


\begin{proof}
Let $n$ and $t$ be the number of vertices and hyperedges
in a hypergraph $\cH$, respectively.
By Lemma~\ref{Hyper:Lem:elimi:uniform},
there exists an acyclic digraph $D_0$
such that $C\cH(D_0) = \cH_0 \cup \{z_1\}$.
If $E(\cH) \setminus E(\cH_0) = \emptyset$, 
then $|E(\cH)|=|E(\cH_0)|=n-r+1$, 
i.e., $hk(\cH)=1$ and the theorem holds. 
Suppose that $E(\cH) \setminus E(\cH_0)\neq \emptyset$.
Let $E(\cH) \setminus E(\cH_0):= \{ e_1, e_2, \ldots, e_{t-n+r-1} \}$.
We define a digraph $D$ by
\begin{eqnarray*}
V(D) &:=& V(\cH) \cup \{ z_1,z_2,\ldots,z_{t-n+r} \}, \\
A(D) &:=&  A(D_0) \cup \left( \bigcup_{i=1}^{t-n+r-1} 
( e_i\rightarrow z_{i+1} ) \right).
\end{eqnarray*}
Then we can check that
$C{\cH}(D) = \cH \cup \{ z_1,z_2,\ldots,z_{t-n+r} \}$
and that $D$ is acyclic.
Thus $hk(\cH) \leq t-n+r$.
By Theorem \ref{Hyper:Thm;general_lower}, we have $hk(\cH) \geq t-n+r$.
Hence the theorem holds.
\end{proof}

Let us consider $2$-uniform hypergraphs, i.e., graphs. 
Opsut \cite{MR679638} showed that the computation of 
the competition number of an arbitrary graph is an NP-hard problem. 
On the other hand, we can show that the hypercompetition number 
of a graph is computed easily from Theorem~\ref{Hyper:Thm:elimi:uniform}. 

\begin{Cor}\label{Hyper:thm;2uniform}
For a connected graph $\cG$,
\[
hk(\cG) = |E(\cG)|-|V(\cG)|+2.
\]
\end{Cor}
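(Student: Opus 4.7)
The plan is to reduce the statement directly to Theorem~\ref{Hyper:Thm:elimi:uniform} in the $2$-uniform case. Since $\cG$ is connected and a graph is a $2$-uniform hypergraph, if I can exhibit a spanning subhypergraph of $\cG$ that admits an elimination ordering, the theorem instantly gives $hk(\cG)=|E(\cG)|-|V(\cG)|+2$.

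The obvious candidate is a spanning tree $T$ of $\cG$, which exists because $\cG$ is connected. I would show that $T$, viewed as a $2$-uniform hypergraph, has an elimination ordering. Fix any vertex $v_1$ of $T$ as a root and list the remaining vertices $v_2,\ldots,v_n$ in a breadth-first (or depth-first) order from $v_1$. Because $T$ is a tree, each $v_i$ with $i\ge 2$ has a unique parent $v_{p(i)}$ with $p(i)<i$, and cannot be adjacent in $T$ to any other $v_j$ with $j<i$, since a second such edge would close a cycle. Hence in the subhypergraph of $T$ induced by $\{v_1,\ldots,v_i\}$ the vertex $v_i$ has degree exactly one, which is precisely the elimination-ordering condition for $r=2$.

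With $\cH_0:=T$ as the spanning subhypergraph, Theorem~\ref{Hyper:Thm:elimi:uniform} applied with $r=2$ yields the desired equality, provided the hypothesis $r<n$ is met, i.e. $|V(\cG)|\ge 3$. The remaining case $|V(\cG)|=2$, where $\cG=K_2$, I would dispose of by a one-line direct argument: the acyclic digraph on $\{v_1,v_2,z\}$ with arcs $(v_1,z)$ and $(v_2,z)$ shows $hk(K_2)\le 1$, and Corollary~\ref{Hyper:cor;isolated_lower} supplies the matching lower bound, agreeing with the formula $|E(K_2)|-|V(K_2)|+2=1$.

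I do not anticipate any real obstacle: the whole work is in the elimination-ordering verification for a spanning tree, which is essentially the statement that trees are built up by successively attaching leaves. Once that is observed, the corollary is a mechanical consequence of Theorem~\ref{Hyper:Thm:elimi:uniform}, and the stark contrast with the NP-hardness of computing the ordinary competition number (noted earlier in the paper) is automatic.
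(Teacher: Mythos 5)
Your proof is correct and follows essentially the same route as the paper: take a spanning tree of $\cG$, order its vertices so that each vertex after the first has exactly one earlier neighbour (the paper peels off pendant vertices from the end, you use a BFS/DFS order from a root --- these produce the same kind of elimination ordering), and then invoke Theorem~\ref{Hyper:Thm:elimi:uniform} with $r=2$. Your separate treatment of the $|V(\cG)|=2$ case is a small point of extra care that the paper's proof silently omits (the theorem's hypothesis $r<n$ fails for $K_2$), but otherwise the arguments coincide.
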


\begin{proof}
Let $|V(\cG)|=n$, and $T$ be a spanning tree of $\cG$.
Let $v_n$ be a pendent vertex of $T$, and
then take a pendent vertex $v_{n-1}$ of $T-v_n$.
For each $2\le i \le n-1$,
we take a  pendent vertex $v_i$ of $T-\{v_{i+1},\ldots,v_n \}$.
Then the ordering $v_1, v_2, \ldots, v_n$ of the vertices of $\cG$
is an {elimination ordering} of $\cH$.
By Theorem~\ref{Hyper:Thm:elimi:uniform},
$hk(\cG) = |E(\cG)|-|V(\cG)|+2$.
\end{proof}

A {\it complete $r$-uniform hypergraph} $\cK(n,r)$ is
the hypergraph with $|V(\cH)|=n$ and $E(\cH) = {V(\cH) \choose r}$,
where ${V(\cH) \choose r}$ denotes the family of all $r$-subsets of $V(\cH)$. 
We can obtain the hypercompetition numbers of complete uniform hypergraphs 
as a corollary of Theorem \ref{Hyper:Thm:elimi:uniform}. 

\begin{Cor}
For $2 \leq r \leq n$, it holds that 
\[
hk(\cK(n,r))= {n \choose r} - n +r. 
\]
\end{Cor}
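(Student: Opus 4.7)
The plan is to apply Theorem \ref{Hyper:Thm:elimi:uniform} directly. Since $\cK(n,r)$ is a connected $r$-uniform hypergraph on $n$ vertices, it suffices to exhibit a spanning subhypergraph $\cH_0$ of $\cK(n,r)$ that admits an elimination ordering; the conclusion then follows because $|E(\cK(n,r))| - |V(\cK(n,r))| + r = \binom{n}{r} - n + r$.

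To construct $\cH_0$, I would fix any labeling $V(\cK(n,r)) = \{v_1, v_2, \ldots, v_n\}$ and define the ``sliding window'' hyperedges
\[
e_i := \{v_{i-r+1}, v_{i-r+2}, \ldots, v_i\} \quad \text{for } r \le i \le n,
\]
and let $\cH_0$ be the $r$-uniform hypergraph on $V(\cK(n,r))$ with edge set $\{e_r, e_{r+1}, \ldots, e_n\}$. Then $\cH_0$ is a spanning subhypergraph of $\cK(n,r)$ (since $\bigcup_{i=r}^n e_i = V(\cK(n,r))$) and is connected (any two consecutive edges $e_i, e_{i+1}$ share the $r-1$ vertices $v_{i-r+2}, \ldots, v_i$).

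It then remains to verify that $v_1, v_2, \ldots, v_n$ is an elimination ordering of $\cH_0$. For $r \le i \le n$, the hyperedges of $\cH_0$ contained in $\{v_1, \ldots, v_i\}$ are exactly $e_r, e_{r+1}, \ldots, e_i$, and among these the vertex $v_i$ appears only in $e_i$ (because $v_i \in e_j$ would require $j \ge i$). Hence $v_i$ has degree one in the subhypergraph induced by $\{v_1, \ldots, v_i\}$, as required.

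I do not foresee a serious obstacle: once the sliding-window subhypergraph is in hand, the hypothesis of Theorem \ref{Hyper:Thm:elimi:uniform} is met, and the formula $hk(\cK(n,r)) = \binom{n}{r} - n + r$ follows immediately. The only point needing a moment of care is confirming that $v_i$ appears in no earlier $e_j$, which is immediate from the definition of $e_j$.
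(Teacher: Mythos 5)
Your construction is exactly the paper's: the same sliding-window spanning subhypergraph (your $e_i=\{v_{i-r+1},\ldots,v_i\}$ for $r\le i\le n$ is just a reindexing of the paper's $\{v_j,\ldots,v_{j+r-1}\}$ for $1\le j\le n-r+1$), followed by the same appeal to Theorem~\ref{Hyper:Thm:elimi:uniform}. The one thing you miss is the boundary case $r=n$: the statement allows $2\le r\le n$, but Theorem~\ref{Hyper:Thm:elimi:uniform} is only stated under the hypothesis $r<n$, so your argument does not formally cover $\cK(n,n)$. The paper disposes of this case separately by observing that $\cK(n,n)$ is the unique $n$-uniform hypergraph on $n$ vertices, so $hk(\cK(n,n))=1=\binom{n}{n}-n+n$; you should add that one line. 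For $r<n$ your verification of the elimination ordering is correct and complete.
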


\begin{proof}
Let $2 \leq r \leq n$. 
By Theorem \ref{Hyper:Thm;general_lower}, 
we have $hk(\cK(n,r)) \ge {n \choose r} - n +r$. 
If $r=n$,
since $\cK(n,n)$ is the only $n$-uniform hypergraph 
with $n$ vertices, then 
it trivially holds that 
$hk(\cK(n,n))=1={n \choose n} - n +n$. 
Suppose that $r<n$. 
Let $V(\cK(n,r)) = \{v_1,v_2,\ldots,v_n\}$. 
Then the spanning subgraph $\cH_0$ with hyperedge set 
$\{ \{v_i, v_{i+1}, \ldots, v_{i+r-1} \} \in E(\cK(n,r)) 
\mid 1 \leq i \leq n-r+1\}$ 
has an elimination ordering $v_1, v_2, \ldots, v_n$. 
By Theorem \ref{Hyper:Thm:elimi:uniform}, 
we have $hk(\cK(n,r)) = {n \choose r} - n +r$. 
Hence, the corollary holds. 
\end{proof}

Next, we present a family of hypergraphs whose hypercompetition numbers
satisfy the equality of the inequality
in Theorem~\ref{Hyper:Thm;general_lower2}.
From (R2), it is well known that
since a forest with no isolated vertex is a chordal graph, its
(ordinary) competition number is exactly one. 
We generalize this result to the case for hypergraphs
by showing that for an $r$-uniform hypergraph $\cH$
with no isolated vertices and no cycles, $hk(\cH)=1$.
We need the following lemma.

\begin{Lem}\label{Hyper:Thm;ConnDegree1}
Let $\cH$ be a hypergraph.
If the number of vertices of degree one in $\cH$ is
at least $|E(\cH)|-1$,
then $hk(\cH)\le 1$,
and the equality holds if and only if $\cH$ has no isolated vertex.
\end{Lem}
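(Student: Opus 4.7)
The plan is to establish $hk(\cH)\le 1$ by explicitly producing an acyclic digraph $D$ on vertex set $V(\cH)\cup\{z\}$ with $C\cH(D)=\cH\cup\{z\}$. I will choose, for each hyperedge $e\in E(\cH)$, a \emph{target} $\psi(e)\in V(\cH)\cup\{z\}$ with $\psi(e)\notin e$, in such a way that $\psi$ is injective, and then set $A(D)=\bigcup_{e\in E(\cH)}(e\to\psi(e))$. Injectivity together with $\psi(e)\notin e$ forces $N_D^{-}(\psi(e))=e$ and $N_D^{-}(v)=\emptyset$ for every other vertex $v$, so $C\cH(D)=\cH\cup\{z\}$; the real work is to arrange that $D$ is acyclic.

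Let $U$ be the set of degree-one vertices of $\cH$, so $|U|\ge t-1$ with $t:=|E(\cH)|$, and for $u\in U$ write $f(u)$ for the unique hyperedge containing $u$. List the hyperedges $e_{\pi(1)},\ldots,e_{\pi(t)}$ in weakly decreasing order of $|U\cap e_{\pi(i)}|$. Define $\psi$ greedily: put $\psi(e_{\pi(1)})=z$, and for $k=2,\ldots,t$ let $\psi(e_{\pi(k)})$ be any vertex $u\in U\cap e_{\pi(j)}$ with $j<k$ that has not yet been used as a target. Because $u\in U$ belongs only to $e_{\pi(j)}\ne e_{\pi(k)}$, the requirement $\psi(e_{\pi(k)})\notin e_{\pi(k)}$ is automatic, and the ``not yet used'' clause keeps $\psi$ injective.

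The main technical point is that this procedure never gets stuck. Writing $a_i=|U\cap e_{\pi(i)}|$, at step $k$ we have used $k-2$ degree-one vertices and the available pool has $\sum_{j=1}^{k-1}a_j-(k-2)$ elements, so the algorithm succeeds provided $\sum_{j=1}^{m}a_j\ge m$ for every $m=1,\ldots,t-1$. If the partial sum were at most $m-1$ for some such $m$, then, since the $a_i$ are nonnegative integers in weakly decreasing order, $a_m$ would not exceed $(m-1)/m<1$ and hence equal $0$; by monotonicity $a_m=a_{m+1}=\cdots=a_t=0$, giving $|U|=\sum_{i=1}^{m-1}a_i\le m-1\le t-2$, contrary to $|U|\ge t-1$. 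Acyclicity of $D$ is then immediate: vertices in $V(\cH)\setminus U$ have no in-arc (the image of $\psi$ lies in $U\cup\{z\}$) and $z$ has no out-arc, so any cycle must lie inside $U$; but along the step $u\mapsto\psi(f(u))$ inside $U$ the index $\pi^{-1}(f(\cdot))$ strictly decreases, which rules out cycles.

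For the ``if and only if'' statement, if $\cH$ has no isolated vertex then Corollary~\ref{Hyper:cor;isolated_lower} gives $hk(\cH)\ge 1$, and combined with the upper bound this yields $hk(\cH)=1$. Conversely, if $\cH$ has an isolated vertex $w$, I run the identical construction with $w$ replacing $z$: since $w$ lies in no hyperedge, $\psi(e_{\pi(1)})=w\notin e_{\pi(1)}$ is legal, the resulting acyclic digraph lives on $V(\cH)$ alone, and its competition hypergraph is exactly $\cH$, so $hk(\cH)=0\ne 1$. The main obstacle is the partial-sum inequality above, which is precisely what makes the hypothesis $|U|\ge|E(\cH)|-1$ the sharp threshold.
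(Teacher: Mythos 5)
Your proof is correct and follows essentially the same strategy as the paper: construct an acyclic digraph on $V(\cH)\cup\{z\}$ by giving each hyperedge a distinct target vertex outside itself, chosen from the degree-one vertices together with the single added vertex, and then settle the equality case via Corollary~\ref{Hyper:cor;isolated_lower} plus the observation that an isolated vertex can play the role of $z$. The only real difference is bookkeeping: the paper labels the hyperedges so that those meeting degree-one vertices come first and uses the explicit shift-by-one assignment $e_i \rightarrow v_{i-1}$, which makes the injective, acyclic assignment immediate and avoids your sorted greedy selection and the Hall-type partial-sum verification.
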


\begin{proof}
Let $n$ and $t$ be the number of vertices and hyperedges
in a hypergraph $\cH$, respectively.
Let $Q$ be the set of vertices of degree one in $\cH$ and
let $q := |Q|$.
Label the hyperedges of $\cH$ as $\{ e_1, e_2, \ldots, e_t \}$
so that $\{ e_1, \ldots, e_l \}$ is the set of distinct hyperedges
containing a vertex of $Q$.
Since each vertex in $Q$ is contained in
a unique hyperedge of $\cH$, we have $l \le q$.
Label the vertices of $\cH$ as $\{v_1, v_2, \ldots, v_n\}$,
where $\{v_1,\ldots,v_q\}$ are the vertices of $Q$ and
$\{v_1, \ldots, v_l\}$ were chosen from $Q$ such that
$v_i \in e_i$ for $1 \le i \le l$.
Now we define a digraph $D$ by
\[
V(D) := V (\cH) \cup \{v_0\} \quad \text{ and } \quad
A(D) := \bigcup_{i=1}^{t} (  e_i \rightarrow v_{i-1} ).
\]
By definition, $E(C\cH(D)) = \{ e_1,\ldots, e_t\}$,
so $C\cH(D) = \cH \cup \{v_0\}$.
It remains to show that $D$ is acyclic.
We prove it by showing that  $v_n, \ldots, v_1,v_0$
is an acyclic ordering of $D$.
Consider an arc $(x, v_{i-1})$ where $x \in e_i$.
If $1 \le i \le l$ then $e_i \subseteq \{ v_i, v_{l+1}, \ldots, v_n\}$,
and so $x = v_j$ with $j > i - 1$.
On the other hand, if $i > l$, then
$e_i \subseteq \{v_{q+1}, \ldots, v_n\}$,
and again $x = v_j$ with $j > i- 1$. So all the arcs in $D$ are
of the form $(v_j , v_{i-1})$ with $j > i-1$,
and this shows that $v_n, \ldots, v_1,v_0$ is an acyclic
ordering of $D$ and therefore $D$ is acyclic.

For determining when $hk(\cH) = 1$, suppose that $\cH$ has an isolated vertex.
Then the hypergraph $\cH_0$ obtained from $\cH$ by deleting the set $I$
of isolated vertices of $\cH$ also has $q$ vertices of degree one
and $t$ hyperedges.
The above argument shows that $hk(\cH_0) \le 1$.
Thus $hk(\cH) = hk(\cH_0 \cup I) = 0$. 
On the other
hand, if $\cH$ has no isolated vertices, then $hk(\cH) = 1$
by the above argument and Corollary~\ref{Hyper:cor;isolated_lower}.
This proves the lemma.
\end{proof}

The following lemma is well-known.

\begin{Lem}[{\cite[p.392]{hyper}}] \label{Hyper:lem:AcyclicHyper}
Let $\cH$ be a hypergraph and $p$ be the number of
connected components of $\cH$.
Then $\cH$ has no cycle if and only if
\[
\sum_{e \in E(\cH)} (|e|-1)=|V(\cH)| - p.
\]
\end{Lem}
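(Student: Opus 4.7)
The plan is to reduce the statement to the classical forest characterization for graphs by passing to the bipartite incidence graph of $\cH$. I would define $B$ to be the (ordinary) bipartite graph with vertex set $V(\cH) \sqcup E(\cH)$ and with an edge joining $v \in V(\cH)$ to $e \in E(\cH)$ whenever $v \in e$. Bookkeeping then gives $|V(B)| = |V(\cH)| + |E(\cH)|$ and $|E(B)| = \sum_{e \in E(\cH)} |e|$. Moreover, since every hyperedge has size at least two, no ``hyperedge-vertex'' of $B$ is isolated, and a walk $u_0 u_1 \cdots u_\ell$ in $\cH$ lifts to an alternating walk $u_0 \, e_1 \, u_1 \, e_2 \, \cdots \, e_\ell \, u_\ell$ in $B$; conversely, any walk in $B$ between two vertices of $V(\cH)$ is alternating and projects to a walk in $\cH$. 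Hence the connected components of $B$ are in bijection with those of $\cH$, so $B$ has exactly $p$ components.

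The main step, and the one requiring care, is to verify the equivalence that $\cH$ has no cycle if and only if $B$ is a forest. Any cycle in $B$ has even length $2k$ and, by bipartiteness, takes the alternating form
\[
v_0 \, e_1 \, v_1 \, e_2 \, \cdots \, v_{k-1} \, e_k \, v_0
\]
with all $v_i \in V(\cH)$ distinct, all $e_i \in E(\cH)$ distinct, and $\{v_{i-1}, v_i\} \subseteq e_i$ for each $i$ (indices read modulo $k$). This is precisely the data of a cycle in $\cH$ in the sense of Section~2, so the two acyclicity notions coincide. The delicate point is matching the distinctness requirements on the vertices $v_i$ and the hyperedges $e_i$ in the hypergraph definition with the distinct-vertex requirement for a graph cycle in $B$; this matching is clean because the bipartite structure of $B$ separates the two kinds of entries into the two sides of the bipartition.

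Finally, I would apply the standard graph-theoretic fact that a graph on $N$ vertices with $c$ connected components is a forest if and only if it has exactly $N - c$ edges. Applied to $B$, this says $B$ is a forest if and only if
\[
\sum_{e \in E(\cH)} |e| \;=\; \bigl(|V(\cH)| + |E(\cH)|\bigr) - p,
\]
which after subtracting $|E(\cH)|$ from both sides is exactly the claimed identity. Combining this with the cycle correspondence above finishes the proof.
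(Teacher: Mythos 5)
Your argument is correct, but note that the paper does not actually prove this lemma: it is quoted from Berge's book with a citation and used as a black box, so there is no in-paper proof to compare against. Your reduction to the incidence bipartite graph $B$ is the standard textbook route and it works: components of $B$ biject with components of $\cH$ (every hyperedge-vertex of $B$ is non-isolated since hyperedges have size $\ge 2$, and walks lift and project as you describe), cycles of $B$ (necessarily of even length $\ge 4$, hence $k\ge 2$) correspond exactly to Berge cycles of $\cH$ with distinct $v_i$ and distinct $e_i$, and the forest criterion $|E(B)|=|V(B)|-p$ translates into the claimed identity after subtracting $|E(\cH)|$. The one caveat worth flagging is definitional rather than logical: the paper's Section~2 definition of a cycle, read literally, only demands that $v_0\cdots v_{k-1}$ be a path with $v_0=v_k$ and does not require a $k$-th hyperedge containing $\{v_{k-1},v_0\}$ distinct from the others; under that literal reading the lemma is false (a single $2$-element hyperedge would yield the ``cycle'' $uvu$ while still satisfying the edge count). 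Your proof tacitly uses the intended Berge notion of a cycle (distinct vertices $v_0,\dots,v_{k-1}$, distinct hyperedges $e_1,\dots,e_k$ with $v_{i-1},v_i\in e_i$ and $k\ge 2$), which is the reading under which the lemma holds and under which it is applied later in the paper; it would be worth stating that explicitly when you invoke the correspondence with ``a cycle in the sense of Section~2.''
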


Now  we will show that for an $r$-uniform hypergraph $\cH$
with no isolated vertices and no cycles, $hk(\cH)=1$.

\begin{Thm}\label{Hyper:Thm;3uniform}
Let $r$ be a positive integer with $r\ge 3$,
and $\cH$ be an $r$-uniform hypergraph with no isolated vertex.
If $\cH$ has no cycle, then $hk(\cH)=1$.
\end{Thm}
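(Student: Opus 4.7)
The plan is to reduce to Lemma~\ref{Hyper:Thm;ConnDegree1} by producing enough vertices of degree one. That lemma gives $hk(\cH)\le 1$ whenever the set of degree-one vertices has size at least $|E(\cH)|-1$, and gives equality to $1$ when additionally $\cH$ has no isolated vertex. Since $\cH$ has no isolated vertex by hypothesis, it suffices to prove that an $r$-uniform acyclic hypergraph with $r\ge 3$ and no isolated vertex has at least $|E(\cH)|-1$ vertices of degree one.

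The proof will be a direct counting argument combining two identities. Let $n=|V(\cH)|$, $t=|E(\cH)|$, and let $p$ be the number of connected components of $\cH$. First, since $\cH$ is $r$-uniform and has no cycle, Lemma~\ref{Hyper:lem:AcyclicHyper} gives
\[
t(r-1)=\sum_{e\in E(\cH)}(|e|-1)=n-p,
\]
so $n=t(r-1)+p$. Second, double counting incidences gives $\sum_{v\in V(\cH)}\deg_\cH(v)=tr$. Let $q$ be the number of vertices of degree one in $\cH$; since every vertex has degree at least one (no isolated vertex), every non-degree-one vertex has degree at least $2$, hence
\[
tr=\sum_{v}\deg_\cH(v)\ge q+2(n-q)=2n-q,
\]
i.e., $q\ge 2n-tr$.

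Substituting $n=t(r-1)+p$ into this bound yields
\[
q\ge 2\bigl(t(r-1)+p\bigr)-tr = t(r-2)+2p.
\]
Because $r\ge 3$ and $p\ge 1$, this gives $q\ge t+2$, which is certainly at least $t-1=|E(\cH)|-1$. Lemma~\ref{Hyper:Thm;ConnDegree1} then applies and, since $\cH$ has no isolated vertex, yields $hk(\cH)=1$.

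I do not foresee any serious obstacle: the whole argument rests on having the right acyclicity identity available (which is Lemma~\ref{Hyper:lem:AcyclicHyper}) and on the $r$-uniformity to convert $\sum(|e|-1)$ into $t(r-1)$. The only mild subtlety is remembering that the hypothesis $r\ge 3$ is genuinely used: it is precisely what makes $t(r-2)\ge t$, so that the lower bound for $q$ comfortably exceeds $t-1$. For $r=2$ (ordinary graphs) the same scheme would only give $q\ge 2p$, consistent with the fact that forests have competition number $1$ only after a different treatment.
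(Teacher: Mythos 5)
Your proof is correct, and it takes a slightly different --- and simpler --- route than the paper's. The core of both arguments is identical: combine Lemma~\ref{Hyper:lem:AcyclicHyper} with the degree double-count $\sum_v \deg_{\cH}(v)=rt$ to force at least $|E(\cH)|-1$ vertices of degree one, then invoke Lemma~\ref{Hyper:Thm;ConnDegree1}. The difference is that the paper runs this count only in the connected case (where $p=1$ gives $n=(r-1)t+1$ and $q\ge (r-2)t+2$) and then treats disconnected hypergraphs by induction on the number of components, gluing the acyclic digraphs of the pieces together by rerouting the arcs entering one component's added vertex $z_2$ into a source vertex of the other component's digraph. You instead observe that Lemma~\ref{Hyper:Thm;ConnDegree1} nowhere assumes connectivity and that the general bound $q\ge t(r-2)+2p$ only improves as $p$ grows, so the induction and the digraph surgery are unnecessary. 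This is a genuine streamlining. What the paper's longer route buys is a reusable gluing device (it effectively shows that a disjoint union of hypergraphs each of hypercompetition number one again has hypercompetition number at most one), which your argument does not produce as a by-product; but for the theorem as stated your proof is shorter and equally rigorous. The only point worth making explicit is the one you implicitly rely on: that the construction in Lemma~\ref{Hyper:Thm;ConnDegree1} really does go through for disconnected $\cH$, which a glance at its proof confirms.
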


\begin{proof}
We prove by induction on the number of connected components of $\cH$.
Suppose that $\cH$ is a connected hypergraph.
Let $n$ and $t$ be the numbers of vertices and hyperedges in $\cH$,
respectively.
Since $|e|=r$ for any $e\in E(\cH)$ and $\cH$ has no cycle,
we obtain by Lemma \ref{Hyper:lem:AcyclicHyper} that
\begin{equation}\label{Hyper:eq:3uniform1}
n=(r-1)t+1.
\end{equation}
Also we have
\begin{equation}\label{Hyper:eq:3uniform2}
\sum_{v \in V(\cH)} \deg_{\cH}(v) =\sum_{e\in E(\cH)} |e| = rt.
\end{equation}
Since $\cH$ is connected, $\deg_{\cH}(v)\ge 1$ for any $v \in V(\cH)$.
Let $q$ be the number of vertices of degree one in $\cH$.
Then,
\begin{equation}\label{Hyper:eq:3uniform3}
\sum_{v \in V(\cH)} \deg_{\cH}(v) \ge 2(n-q) + q=2n-q.
\end{equation}
By (\ref{Hyper:eq:3uniform1}), (\ref{Hyper:eq:3uniform2}), 
and (\ref{Hyper:eq:3uniform3}),
we have 
\[
q \geq 2n-rt = 2(rt-t+1)-rt = (r-2)t+2 \geq t+2. 
\]
By Lemma \ref{Hyper:Thm;ConnDegree1},
it holds that $hk(\cH) = 1$.

Suppose that the statement holds for hypergraphs 
with $p$ connected components
where $p\ge 1$.
Now suppose that $\cH$ has $p+1$ connected components.
Take a connected component $\cH_1$ of $\cH$.
Let $\cH_2$ be the union of the connected components of $\cH$ 
other than $\cH_1$.
Then $\cH_2$ has $p$ components
and it has no isolated vertex and no cycle.
By induction hypothesis,
we have $hk(\cH_2) =1$.
Then, there exists an acyclic digraph $D_1$ (resp. $D_2$) such that
$C\cH(D_1)=\cH_1 \cup \{z_1\}$ (resp.
$C\cH(D_2)=\cH_2 \cup \{z_2\}$),
where $z_1$ (resp. $z_2$) is a new isolated vertex. 
Without loss of generality, 
we may assume that $N^+_{D_2}(z_2)=\emptyset$. 
Since $D_1$ is acyclic, there exists a vertex $v$ in $D_1$
which has no in-neighbor in $D_1$. 
We define a digraph $D$ by
\begin{eqnarray*}
V(D) &:=& V(\cH) \cup \{z_1\}, \\
A(D) &:=& A(D_1) \cup \left( A(D_2) \setminus
( N_{D_2}^-(z_2) \rightarrow z_2) \right)
\cup (N_{D_2}^-(z_2)\rightarrow  v ).
\end{eqnarray*}
Then $D$ is acyclic and $C\cH(D)=\cH \cup \{z_1\}$ 
and so $hk(\cH)\le 1$. 
Since $\cH$ has no isolated vertex, we have $hk(\cH) \geq 1$
by Corollary \ref{Hyper:cor;isolated_lower}.
Hence $hk(\cH)=1$.
\end{proof}


\end{document}